\newtheorem{prop}{Proposition}
\newtheorem{thm}{Theorem}
\newtheorem{lemma}{Lemma}
\title{The Structure of the Heawood Graph}
\author{Emille Davie Lawrence}
\author{Robin Wilson}
\address{Department of Mathematics and Statistics, University of San Francisco, San Francisco, CA 94117}
\address{Department of Mathematics and Statistics, Cal Poly Pomona, Pomona, CA 91768}
\begin{document}

\begin{abstract} We give a description of the cycle structure of the Heawood graph, $C_{14}$. In particular, we prove that the automorphism group of $C_{14}$ acts transitively on the set of $12$-cycles, Hamiltonian cycles, and disjoint pairs of $6$-cycles. We also enumerate $12$-, $10$-, and $8$-cycles in $C_{14}$, as well as pairs of disjoint $6$-cycles.

\end{abstract}

\maketitle

\section{Introduction}

The Heawood graph, named after British mathematician Percy John Heawood, is the graph shown below in Figure~\ref{Heawood}. The Heawood graph, often denoted by $C_{14}$, is one of $14$ graphs in what is known as the $K_7$-family of graphs. Each graph in this family is obtained from the complete graph $K_7$ by some finite sequence of $\Delta Y$- and $Y\Delta$-exchanges. A $\Delta Y$-exchange is performed by deleting the edges of a $3$-cycle $xyz$, and adding a new degree $3$ vertex $v$ that is adjacent to the vertices $x, y,$ and $z$. A $Y\Delta$-exchange is the reverse of this operation. All of the graphs in the $K_7$-family are known to be intrinsically knotted~\cite{MR2911083}.

The graph $C_{14}$ has been an object of study for some time, and some of the structural properties of the graph which we detail have been stated in the literature without proof. We attempt to fill that gap here. Our main results establish transitivity of the action of the automorphism group on $14$-, $12$-, and disjoint pairs of $6$-cycles in $C_{14}$. We also verify using Mathematica the number of cycles of each type that occur in $C_{14}$.

\begin{figure}[http]
\begin{center}
\includegraphics[width=.4\textwidth]{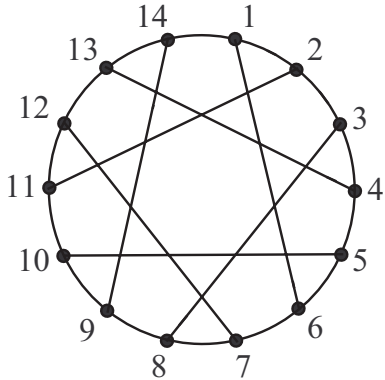}
\caption{The Heawood graph, $C_{14}$.}
\label{Heawood}
\end{center}
\end{figure}

\section{Cycle structure of the Heawood graph}

We start by recalling the following the following results of Coxeter and Labbate.

\begin{prop}\label{known}\cite{MR0038078},\cite{MR1892690} Let $C_{14}$ be the Heawood graph. The the following hold.

\begin{enumerate}

\item The automorphism group of $C_{14}$ is isomorphic to  $ PGL_2(7)$ and has order $336$. 

\item The shortest cycle length in $C_{14}$ is $6$, and  $C_{14}$ has $28$ $6$-cycles.  

\item $C_{14}$ has $24$ $14$-cycles.  
\end{enumerate}

\end{prop}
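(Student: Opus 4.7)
The plan is to identify $C_{14}$ with the Levi (point-line incidence) graph of the Fano plane $\mathrm{PG}(2,2)$: the $7$ points and $7$ lines form the two color classes of the bipartition, and edges record incidence. All three parts then flow from classical projective geometry.

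For (1), I would first note that any automorphism of a connected bipartite graph either preserves or swaps the color classes. The bipartition-preserving subgroup is precisely the collineation group $\mathrm{Aut}(\mathrm{PG}(2,2)) \cong \mathrm{PGL}_3(2)$, of order $168$. The Fano plane is self-dual, so a polarity interchanging points and lines extends this by an outer involution, doubling the order to $336$. To finish, I would invoke the exceptional isomorphism $\mathrm{PGL}_3(2) \cong \mathrm{PSL}_2(7)$ and identify the duality with the outer automorphism of $\mathrm{PSL}_2(7)$; since $\mathrm{PSL}_2(7).2 \cong \mathrm{PGL}_2(7)$, the full automorphism group is $\mathrm{PGL}_2(7)$.

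For (2), bipartiteness rules out odd cycles, and a $4$-cycle would require two distinct lines of $\mathrm{PG}(2,2)$ to meet in two points, violating the plane axioms. Hence the girth is at least (and equals) $6$. A $6$-cycle alternates as $p_1\,\ell_{12}\,p_2\,\ell_{23}\,p_3\,\ell_{31}\,p_1$, so it is in bijection with an unordered non-collinear triple of points. The count is $\binom{7}{3} - 7 = 28$, subtracting one collinear triple per line.

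For (3), the count $24$ is the hardest part, since no single local axiom pins it down. I would use edge-transitivity, which follows from (1): fix an edge $e$, enumerate short paths extending $e$, and branch until each partial path either closes into a Hamiltonian cycle or fails. Edge-transitivity forces every edge to lie in the same number of Hamiltonian cycles, which (given the count $24$) must equal $24 \cdot 14 / 21 = 16$, and this heavily constrains the branching. Alternatively, granting the transitivity of $\mathrm{Aut}(C_{14})$ on Hamiltonian cycles (proved later in the paper), orbit-stabilizer forces the stabilizer of any Hamiltonian cycle to have order $336/24 = 14$, which one can identify with the dihedral group of rotations and reflections of the $14$-cycle; exhibiting such a dihedral subgroup preserving a chosen Hamiltonian cycle then yields the count. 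The main obstacle is exactly this enumeration: either a bounded but tedious case analysis along an initial edge, or reliance on the later transitivity result combined with a concrete stabilizer identification (the authors report a Mathematica verification for this count).
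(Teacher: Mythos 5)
The paper offers no proof of this proposition at all---it is quoted from Coxeter and from Labbate with citations only---so your proposal has to stand on its own rather than be compared to an argument in the text. Parts (1) and (2) via the identification of $C_{14}$ with the incidence graph of the Fano plane are correct and are the standard arguments: a color-preserving automorphism of the Levi graph is exactly a collineation, giving $\mathrm{PGL}_3(2)$ of order $168$; self-duality supplies an index-$2$ extension of order $336$; and the point is that a polarity induces the inverse--transpose automorphism, which is \emph{outer} for $\mathrm{GL}_3(2)$ (it swaps the two classes of maximal parabolics), and that is precisely what rules out $\mathrm{PSL}_2(7)\times C_2$ and forces $\mathrm{PGL}_2(7)$---you gesture at this but should say it explicitly, since it is the only place where the isomorphism type rather than the order is at stake. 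The girth argument and the bijection between $6$-cycles and non-collinear triples, giving $\binom{7}{3}-7=28$, are both right.

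Part (3) is where the proposal is only a plan, and as written it is mildly circular: you derive the per-edge count $16$ and the stabilizer order $14$ \emph{from} the answer $24$ rather than toward it. Your second route is the one that works, and the paper's Lemma~\ref{14cyclepicture} and Theorem~\ref{14transitive} (which depend only on girth, bipartiteness and $3$-regularity, so there is no circularity in invoking them) hand you exactly the missing ingredients. By Theorem~\ref{14transitive} the Hamiltonian cycles form a single orbit, so their number is $336/|S|$ where $S$ is the stabilizer of one of them; $S$ is faithfully the group of symmetries of the $14$-gon that preserve the seven chords, which by Lemma~\ref{14cyclepicture} are $\overline{x_jx_{j+5}}$ for $j$ odd. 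The one-step rotation carries these to non-edges, so $|S|\le 14$; the two-step rotations together with the reflection $x_k\mapsto x_{7-k}$ do preserve the chords, so $|S|=14$ and the count is $336/14=24$. Be careful with the phrase ``the dihedral group of rotations and reflections of the $14$-cycle'': that group has order $28$, and if the stabilizer were all of it you would get $12$ Hamiltonian cycles, not $24$. The upper bound $|S|\le 14$ is therefore not a formality but the substance of the step you left out.
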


We also note that $C_{14}$ is bipartite, hence there are no cycles of odd length. Furthermore, $C_{14}$ contains disjoint pairs of $6$-cycles. Hence, Proposition~\ref{cycles} is a complete enumeration of every cycle that can occur in $C_{14}$. In ~\cite{MR2490600} Staples introduced an adjacency matrix for finite graphs that counts the number of $n$-cycles in the graph. We used this adjacency matrix to verify the following using Mathematica.

\begin{prop}\label{cycles} The Heawood graph contains $56$ $12$-cycles, $8$ $10$-cycles, and $21$ $8$-cycles, and $42$ pairs of disjoint $6$-cycles.
\end{prop}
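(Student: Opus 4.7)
The plan is to realize $C_{14}$ as the bipartite incidence graph of the Fano plane $PG(2,2)$: the two parts are the seven points and the seven lines, and edges are incidences. Under this identification a $k$-cycle in $C_{14}$ is an alternating cyclic sequence $p_1 l_1 p_2 l_2 \cdots p_{k/2} l_{k/2}$ of distinct points and lines with $p_i, p_{i+1} \in l_i$; equivalently, a choice of $k/2$ points in cyclic order whose consecutive-pair-lines are all distinct. This converts each enumeration into a counting problem about point configurations in $PG(2,2)$.

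For the eight-cycles I would argue as follows. If the four chosen points contain a collinear triple on a line $m$, then in any cyclic ordering two consecutive pair-lines must coincide with $m$, so no $8$-cycle is formed. If no three of the four points are collinear, then any shared consecutive pair-line would have to contain all four points, which is impossible, so each of the three inequivalent cyclic orderings yields a valid $8$-cycle. Since two distinct lines share exactly one point and so cannot both fit inside a $4$-set, the $4$-subsets containing some line number $7 \cdot \binom{4}{1} = 28$, leaving $\binom{7}{4}-28=7$ general-position $4$-subsets and a total of $7 \cdot 3 = 21$ eight-cycles.

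For the disjoint pairs of $6$-cycles, recall that each $6$-cycle corresponds to a non-collinear triple of points equipped with its three pair-lines. Two disjoint $6$-cycles yield two disjoint non-collinear triples $T_1, T_2$ covering six of the seven points; moreover the six associated lines are automatically distinct, since a shared line would carry two points from each triple and hence four points in all. So the count equals, summed over the omitted point $p$, the number of unordered partitions of the remaining six points into two non-collinear triples. Enumerating the $\binom{6}{3}/2 = 10$ partitions for a fixed $p$ and discarding those whose halves include one of the four Fano lines missing $p$ leaves $6$ valid partitions, hence $7 \cdot 6 = 42$ in all.

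The $12$-cycles and $10$-cycles are the main obstacle, and here the cleanest route is the computational one signalled in the text: Staples' cycle-counting adjacency matrix applied to $C_{14}$ returns these counts directly. A more structural attack on the $56$ twelve-cycles parameterizes by the omitted point-line pair $(p,l)$, splits into the $21$ incident and $28$ non-incident cases, and counts Hamiltonian cycles in $C_{14}\setminus\{p,l\}$ using the transitivity of $PGL_2(7)$ on flags and on anti-flags. For the $10$-cycles, one may observe that each arises as one of the two pieces obtained by cutting a $14$-cycle along a chord at cyclic distance $5$ (the other piece being a $6$-cycle), but confirming that this combinatorial bookkeeping yields exactly $8$ ten-cycles is precisely the place where the Mathematica verification seems hardest to avoid.
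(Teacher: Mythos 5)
Your realization of $C_{14}$ as the point--line incidence graph of the Fano plane is a genuinely different route from the paper's, which obtains all four counts from a Mathematica computation with Staples' cycle-counting adjacency matrix and offers no hand proof. Within that model your arguments for the $21$ eight-cycles and the $42$ disjoint pairs of six-cycles are complete and correct: the observations that two Fano lines cannot both lie in a $4$-set of points, and that a line shared by two disjoint consecutive pairs would have to carry four points, do all the work, and the arithmetic checks out. For the $56$ twelve-cycles you only sketch a plan, but the paper itself later carries out exactly that plan (Lemma~\ref{12cyclepicture} forces the two omitted vertices to form a non-incident point--line pair, and Lemma~\ref{12cyclelemma} exhibits exactly two $12$-cycles over each of the $28$ such pairs, giving $56$); citing those results would close this part without the computer.

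The genuine problem is the ten-cycles, and it is not that the verification is hard to de-computerize: the stated count of $8$ is wrong, and your own framework detects this. A $10$-cycle is a cyclic ordering of a $5$-set $S$ of points in which no three cyclically consecutive points are collinear (non-consecutive pair-lines can never coincide, since that would force four collinear points). Every $5$-set of points is the complement of a pair and hence contains exactly $7-5=2$ lines, which meet in a common point $x$; write $S=\{x,y,z,u,v\}$ with lines $\{x,y,z\}$ and $\{x,u,v\}$. Only the three windows containing $x$ can be collinear, and the admissible cyclic orders are exactly those in which the four points $y,z,u,v$ alternate between $\{y,z\}$ and $\{u,v\}$ as one traverses the cycle skipping $x$: there are $4$ of these up to rotation and reflection. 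Hence the Heawood graph has $\binom{7}{5}\cdot 4=84$ ten-cycles, not $8$. A one-line sanity check shows $8$ is impossible in any case: by vertex-transitivity every vertex lies on the same number $m$ of ten-cycles, so $10\,c_{10}=14m$ and $c_{10}$ must be divisible by $7$. Rather than deferring to the Mathematica output here, you should have pushed your own (correct) method through and flagged the discrepancy with the stated proposition.
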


To establish the transitivity of the action of $PGL_2(7)$ on the set of $14$-cycles of $C_{14}$, we need the following lemma.

\begin{lemma} \label{14cyclepicture}  Let $C$ be a $14$-cycle of $C_{14}$ labeled consecutively by $x_1$ through $x_{14}$.  Then the following hold:

\medskip

\begin{enumerate}

\item The vertex $x_i$ is adjacent to either vertex $x_{i+5}$ or $x_{i-5}$ mod $14$, for each $i=1,\dots, 14$.  

\medskip

\item If $x_i$ is adjacent to $x_{i+5}$ then for all $j$ with the same parity as $i$, $x_j$ is adjacent to $x_{j+5}$.

\end{enumerate}
\end{lemma}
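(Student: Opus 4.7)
The plan relies on the fact that $C_{14}$ is $3$-regular, bipartite with the two color classes alternating along any Hamiltonian cycle, and of girth $6$. Each $x_i$ has exactly one non-cycle neighbor $x_{i+k}$, where $k$ is odd (by bipartiteness) and $k \not\equiv \pm 1 \pmod{14}$, so $k \in \{\pm 3, \pm 5, 7\}$ modulo $14$. The value $k = \pm 3$ is impossible, since $x_i x_{i+1} x_{i+2} x_{i+3}$ would be a $4$-cycle, contradicting girth $6$.

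For part (1), it remains to rule out $k = 7$. Suppose toward a contradiction that $x_1 \sim x_8$ and consider the chord at $x_2$. The three admissible possibilities are $x_7$, $x_9$, or $x_{11}$; the first two each form a $4$-cycle with the edge $x_1 x_8$ and two cycle edges (for instance $x_1 x_2 x_9 x_8$ or $x_1 x_2 x_7 x_8$), so only $x_2 \sim x_{11}$ survives. Repeating the analysis at $x_3$: the choice $x_3 \sim x_8$ is blocked because $x_8$ already has its three neighbors $x_7, x_9, x_1$, and the choices $x_3 \sim x_{10}$ and $x_3 \sim x_{12}$ each close a $4$-cycle with the chord $x_2 x_{11}$, namely $x_2 x_3 x_{10} x_{11}$ or $x_2 x_3 x_{12} x_{11}$. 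Every case leads to a contradiction, so $k = 7$ is impossible, and $|k| = 5$ as claimed.

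For part (2), relabel so that $i = 1$ and assume $x_1 \sim x_6$. By part (1), the chord at each $x_j$ goes either to $x_{j-5}$ or $x_{j+5}$. I will show $x_3 \sim x_8$; otherwise $x_3 \sim x_{12}$, and a case split on the chord at $x_2$ produces a $4$-cycle in each subcase: $x_2 \sim x_7$ gives $x_1 x_2 x_7 x_6$, while $x_2 \sim x_{11}$ gives $x_2 x_3 x_{12} x_{11}$. The same argument, shifted by two indices at a time, propagates the conclusion along the successive pairs $(x_3, x_5), (x_5, x_7), \ldots, (x_{13}, x_1)$, yielding $x_j \sim x_{j+5}$ for every odd $j$; the case of even $i$ is symmetric by shifting the base point.

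The main obstacle is the $k = 7$ step in part (1): unlike the $k = \pm 3$ case, the chord $x_1 x_8$ alone does not force an immediate $4$-cycle with every admissible choice of chord at $x_2$, so the subcase $x_2 \sim x_{11}$ survives the first round and is eliminated only by a second round of analysis at $x_3$ that combines the girth hypothesis with the degree-$3$ constraint at $x_8$. The remaining checks in both parts amount to spotting $4$-cycles formed by one chord together with two cycle edges and one additional chord, which is routine.
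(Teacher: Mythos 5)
Your proof is correct and follows essentially the same approach as the paper: rule out short chords by girth and even chords by bipartiteness, then eliminate the remaining $k=7$ possibility by forcing chords at successive vertices until a contradiction, and obtain part (2) by the same kind of propagation. The only difference is cosmetic: in the $k=7$ case the paper forces chords at $x_{i+1}$ and $x_{i-1}$ and then derives a contradiction from a distance bound at $x_{i+2}$, whereas you reach the contradiction directly by exhausting the three candidates at $x_3$; both are routine case checks.
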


\begin{proof} (1)  Let $C$ be a $14$-cycle in $C_{14}$ having vertices labeled consecutively $x_1$ to $x_{14}$, and let $x_i$ be a vertex on $C$. For convenience, we will consider the indices mod $14$.  Since $x_i$ has valence $3$, it is adjacent to exactly one vertex other than $x_{i+1}$ and $x_{i-1}$. Because the shortest cycle is of length $6$, there can be no edge from vertex $x_i$ to vertices $x_{i+2}$, $x_{i+3}$, $x_{i+4}$, $x_{i-2}$, $x_{i-3}$, and $x_{i-4}$. Also, because there are no cycles of odd length, there can be no edge from $x_i$ to $x_{i+6}$ or $x_{i-6}$. Thus, only vertices $x_{i+5}, x_{i-5}$ and $x_{i+7}$ could be adjacent to $x_i$.

Suppose there is an edge from $x_i$ to $x_{i+7}$, and consider $x_{i+1}$.  Then, as above, the only vertices that can be adjacent to $x_{i+1}$ are $x_{i+6}, x_{i-4},$ and $x_{i-6}$. However, $x_{i+6}$ and $x_{i-6}$ are both adjacent to $x_{i+7}$, and an edge between $x_{i+1}$ and either of these vertices would result in a $4$-cycle involving edge $\overline{x_ix_{i+7}}$. Therefore, there must be an edge between $x_{i+1}$ and $x_{i-4}$.

Then by a similar argument there must be an edge between $x_{i-1}$ and $x_{i+4}$.  Now, observe that in the graph shown below in Figure~\ref{lemma1}, the distance between $x_{i+2}$ and any other vertex is at most $4$. Hence, vertex $x_{i+2}$ is in a cycle of length less than $6$ which is impossible. Therefore, $x_i$ is adjacent to either $x_{i+5}$ or $x_{i-5}$. 

    \begin{figure}[http]
\begin{center}
\includegraphics[width=.40\textwidth]{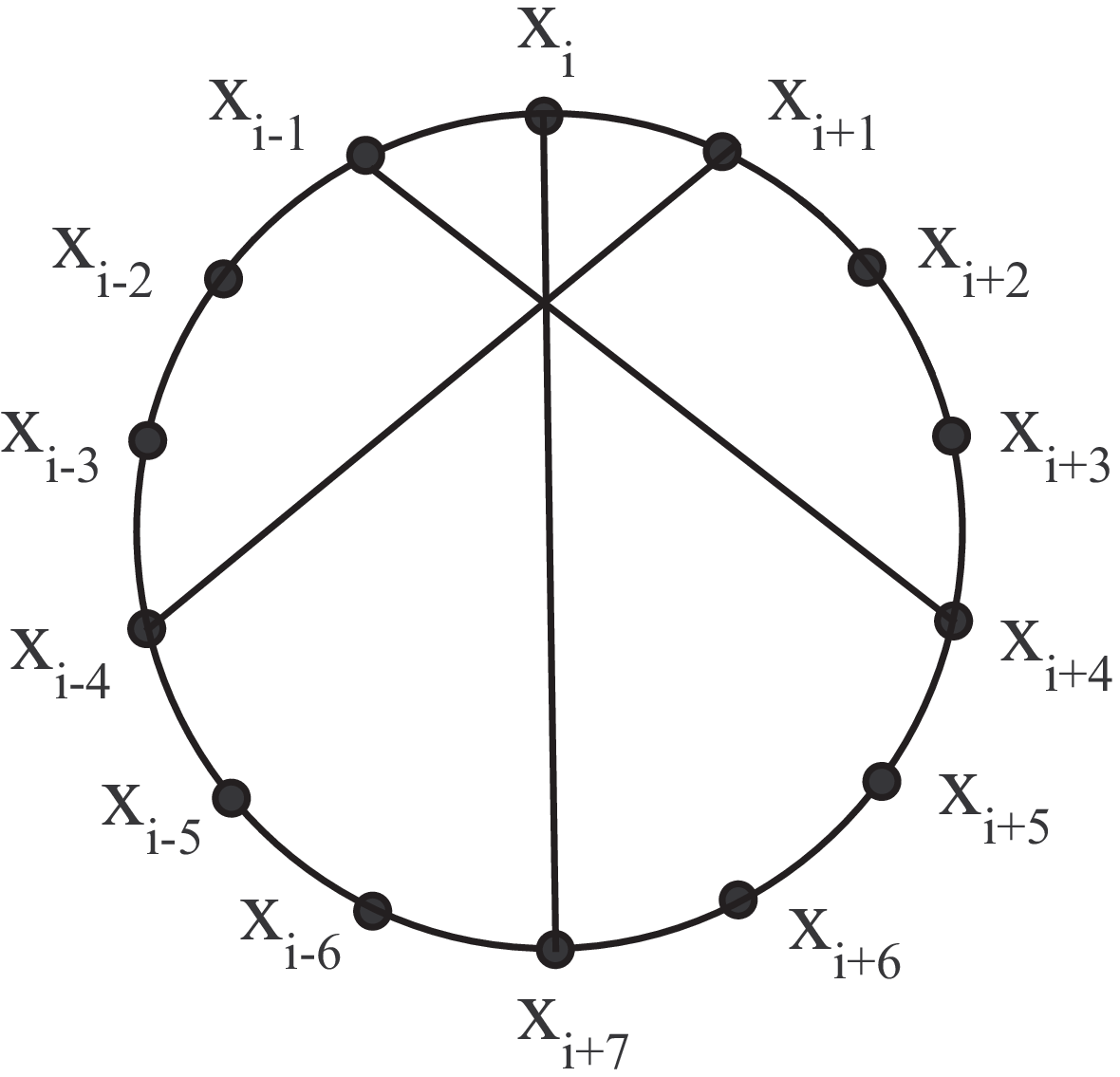}
\caption{}
\label{lemma1}
\end{center}
\end{figure}

\noindent (2) Suppose $x_i$ is adjacent to $x_{i+5}$. Then by the above $x_{i+1}$ must adjacent to either $x_{i+6}$ or $x_{i-4}$.  However, if $x_{i+1}$ is adjacent to $x_{i+6}$, then $\overline{x_ix_{i+1}x_{i+6}x_{i+5}x_{i}}$ is a $4$-cycle which is impossible. Thus $x_{i+1}$ is adjacent to $x_{i-4}$.  Similarly, $x_{i+2}$ is adjacent to $x_{i+7}$.  Repeating this argument consecutively for each vertex, we obtain the result. 
\end{proof}\bigskip

\noindent The following result appears in the literature without proof.

\begin{thm}\label{14transitive} The automorphism group of $C_{14}$ acts transitively on the set of $14$-cycles.    \end{thm}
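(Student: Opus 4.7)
The plan is to use Lemma~\ref{14cyclepicture} to show that every $14$-cycle in $C_{14}$ has the same ``chord signature'' in $C_{14}$, and then to produce an automorphism sending one $14$-cycle to another by matching up their normalized labelings vertex-by-vertex.

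First I would note that $C_{14}$ is $3$-regular on $14$ vertices, hence has exactly $21$ edges, so any $14$-cycle $C$ accounts for $14$ edges of $C_{14}$ and leaves exactly $7$ ``chord'' edges. If I label the vertices of $C$ consecutively as $x_1,\dots,x_{14}$ (indices mod $14$) with $x_1\sim x_6$, then Lemma~\ref{14cyclepicture}(1)--(2) forces the seven chord edges to be exactly the pairs $x_i x_{i+5}$ with $i$ odd, namely $x_1x_6, x_3x_8, x_5x_{10}, x_7x_{12}, x_9x_{14}, x_{11}x_2, x_{13}x_4$. (Equivalently, every $x_j$ of even index satisfies $x_j\sim x_{j-5}$, which gives the same edge set.)

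Given two $14$-cycles $C$ and $C'$, I would label them $x_1,\dots,x_{14}$ and $y_1,\dots,y_{14}$ in this normalized way, so that both satisfy the adjacency $x_1\sim x_6$ and $y_1\sim y_6$, and define $\phi:V(C_{14})\to V(C_{14})$ by $\phi(x_i)=y_i$. By construction $\phi$ carries the $14$ cycle-edges of $C$ bijectively onto the cycle-edges of $C'$, and by the chord description above it carries each chord $x_i x_{i+5}$ to $y_i y_{i+5}$, which is also an edge of $C_{14}$. Since these $14+7=21$ edges exhaust $E(C_{14})$ on either side, $\phi$ preserves adjacency on every edge and is therefore a graph automorphism; by construction it maps $C$ to $C'$, establishing transitivity.

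The only step that requires real care is verifying that every $14$-cycle admits a consecutive labeling with $x_1\sim x_6$. This amounts to a parity check on the $D_{14}$-action on labelings of $C$: a cyclic shift of the labels by $1$ flips the parity of every index and hence swaps the two alternatives in Lemma~\ref{14cyclepicture}(1), while a reversal of orientation either preserves or swaps the convention depending on which vertex is chosen as the new $x_1$. Combining these moves, exactly half of the $28$ possible consecutive labelings of $C$ lie in the normalized convention, so such a labeling always exists. I expect this parity bookkeeping to be the only genuine obstacle; once the normalization is in hand, everything else is mechanical.
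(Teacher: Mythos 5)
Your proposal is correct and follows essentially the same route as the paper: normalize the labeling of each $14$-cycle so that $x_1\sim x_6$, invoke Lemma~\ref{14cyclepicture} to pin down all seven chords, and check that the index-matching map is an automorphism. Your extra care in verifying that the normalized labeling exists and that the $21$ edges are exhausted is a welcome tightening of the same argument, not a different one.
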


\begin{proof} Let $C$ and $D$ be $14$-cycles.  By applying Lemma~\ref{14cyclepicture} Part (1), we can label the vertices of $C$ and $D$  consecutively by $x_1,x_2,\dots x_{14}$ and $y_1,y_2,\dots y_{14}$ respectively such that there are edges from $x_1$ to $x_6$ and from $y_1$ to $y_6$. Now by Lemma~\ref{14cyclepicture} Part (2), for all odd $j$, $x_{j}$ is adjacent to $x_{j+5}$ and otherwise $x_j$ is  adjacent to $x_{j-5}$, and  for all odd $j$, $y_{j}$ is adjacent to $y_{j+5}$ and otherwise $y_j$ is  adjacent to $y_{j-5}$.  Thus we define an automorphism $\alpha$ of $C_{14}$ by $\alpha(x_i)=y_i$ for all $i$ which takes $C$ to $D$.
\end{proof}\bigskip

The next lemma will help us establish the transitivity of $PGL_2(7)$ on the set of $12$-cycles in $C_{14}$.

\begin{lemma} \label{12cyclepicture} Let $C$ be a $12$-cycle of $C_{14}$ with vertices labeled $1,\ldots,12$, and let $v$ and $w$ be the two vertices of $C_{14}$ not on $C$. Then the following hold.

\medskip

\begin{enumerate}

\item There is no edge between $v$ and $w$. 

\medskip

\item Any vertex on $C$ that is adjacent to $v$ is also adjacent to a vertex that is adjacent to $w$.

\end{enumerate}
\end{lemma}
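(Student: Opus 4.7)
The plan is to prove Parts (1) and (2) independently. Part (1) will proceed by contradiction via a chord-counting argument on $C$, while Part (2) will follow quickly from bipartiteness and the fact that the Heawood graph has diameter $3$.

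For Part (1), suppose for contradiction that $vw \in E(C_{14})$. Each of the twelve vertices of $C$ has degree $3$ and uses two of its edges on $C$, so it contributes exactly one ``third'' edge, going either to $v$, to $w$, or across $C$ as a chord. Since $vw$ is an edge, $v$ and $w$ each have only $2$ edges leaving into $C$, accounting for $4$ of the $12$ third edges; the remaining $8$ pair up into $4$ chords. Bipartiteness and girth $6$ force each chord to join vertices of $C$ at cyclic distance $5$: a chord at cyclic distance $d$ splits $C$ into cycles of lengths $d+1$ and $13-d$, both of which must be even and at least $6$, leaving only $d=5$. Writing $c_i = \{i, i+5\}$ for $i \in \mathbb{Z}_{12}$, two distinct chords $c_i, c_j$ either share a vertex (when $j - i \equiv \pm 5 \pmod{12}$) or form a $4$-cycle together with two edges of $C$ (when $j - i \in \{\pm 1, 6\} \pmod{12}$); hence they can coexist only when $j - i \in \{\pm 2, \pm 3, \pm 4\} \pmod{12}$. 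The crux is to show that no four indices are pairwise compatible under this relation. Since this relation is a circulant on $\mathbb{Z}_{12}$ and hence vertex-transitive, it suffices to check that no $K_4$ contains $0$: the six indices compatible with $0$ are $\{2, 3, 4, 8, 9, 10\}$, and within this set the compatible pairs form only the $4$-cycle on $2, 4, 8, 10$ (with $3$ and $9$ isolated), which has no triangle. This gives the desired contradiction.

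For Part (2), fix $a \in V(C)$ with $a \sim v$. Since $C_{14}$ is bipartite with parts of size $7$ and $C$ uses $6$ vertices from each part, the two vertices off $C$ lie in opposite parts, so $v$ and $w$ have opposite color. Hence $a$, being adjacent to $v$, lies in the same part as $w$, so $d_{C_{14}}(a, w)$ is even; since $a \neq w$ and $C_{14}$ has diameter $3$, we get $d(a, w) = 2$. Thus some vertex $b$ satisfies $a \sim b \sim w$. By Part (1), $v \not\sim w$, so $b \neq v$, forcing $b$ to be one of the two neighbors of $a$ along $C$, as required.

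The main obstacle is the combinatorial check in Part (1) that the chord-compatibility circulant on $\mathbb{Z}_{12}$ has no $K_4$; everything else is routine bookkeeping or a standard fact about $C_{14}$.
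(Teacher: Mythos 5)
Your proof is correct, but both halves run along genuinely different lines from the paper's. For Part (1), the paper first pins down where the neighbors of $v$ and $w$ can sit on $C$ (using girth and parity to force $v\sim 1,7$ and $w\sim 4,10$ up to symmetry) and only then counts the four leftover edges, asserting without detail that any such chord closes a short or odd cycle; you instead bypass the positioning of $v$ and $w$ entirely and make the chord count the whole argument, showing every chord must have cyclic distance $5$ and that the coexistence relation on the twelve candidate chords is the circulant with connection set $\{\pm2,\pm3,\pm4\}$, which is $K_4$-free. Your version is arguably more complete than the paper's on exactly the step the paper leaves implicit, and the circulant check is clean. For Part (2), the paper again argues positionally (the three neighbors of $v$ are mutually at distance $4$ on $C$, and parity forces $w$'s neighbors into $\{2,6,10\}$ or $\{4,8,12\}$), whereas you get the conclusion in two lines from bipartiteness plus the fact that $C_{14}$ has diameter $3$. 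That is an elegant shortcut, but note that diameter $3$ is not established anywhere in the paper before this lemma --- it is only verified later, inside the proof of Lemma~\ref{12cyclelemma}, via the distance classes $D_1$, $D_2$, $D_3$. The fact is true and standard (the Heawood graph is distance-regular of diameter $3$), so this is a citation gap rather than a logical one, but as written your Part (2) leans on an unproved input; either cite it explicitly or import the short distance-class computation. One incidental dividend of the paper's more laborious Part (2) is that it identifies the neighbor sets of $v$ and $w$ concretely ($\{1,5,9\}$ and $\{2,6,10\}$ or $\{4,8,12\}$), which the proof of Theorem~\ref{12transitive} then reuses; your argument would need to be supplemented by that determination before the theorem's proof could quote it.
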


\begin{proof}  

\noindent (1) Suppose, on the contrary, that $v$ is adjacent to $w$. Then $v$ is adjacent to exactly two vertices on $C$. Furthermore, these two vertices are at least distance $4$ in $C$, else we would have a cycle of length less than $6$. Without loss of generality, say $v$ is adjacent to vertex 1. Then because there is no cycle of odd length and no cycle of length less than $6$ in $C_{14}$, vertex $v$ could only possibly be adjacent to vertex $5,7,$ or $9$.

Suppose $v$ is adjacent to vertex 5. We consider the vertices on $C$ that could be adjacent to vertex $w$. Notice that $w$ cannot be adjacent to any vertex distance $2$ or less on $C$ to vertices $1$ or $5$, else there would be a cycle of length less than $6$. This gives that vertices $8,9,$ and $10$ are the only vertices that could possibly be adjacent to $w$. However, given that two of these must be adjacent to $w$, we see that this is impossible since there cannot be a cycle of length less than $6$. Therefore, $v$ cannot be adjacent to vertex $5$. By symmetry, $v$ also cannot be adjacent to vertex $9$. Hence $v$ must be adjacent to vertex $7$. It then follows that $w$ must be adjacent to vertices $4$ and $10$ as in Figure~\ref{lemma2}.

  \begin{figure}[http]
\begin{center}
\includegraphics[width=.4\textwidth]{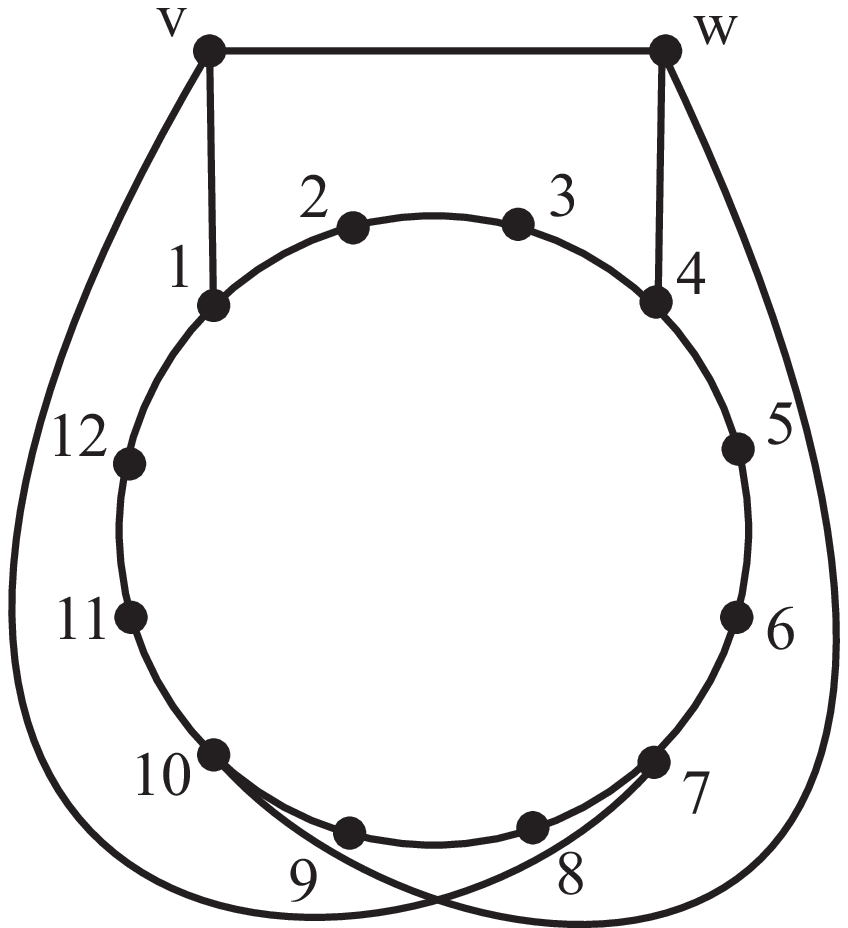}
\caption{}
\label{lemma2}
\end{center}
\end{figure}

Now since $C_{14}$ has $21$ edges, there must be $4$ more edges of $C_{14}$ that are not adjacent to $v$ or $w$ and are also not in $C$. Any such edge would either be contained in a cycle of length less than $6$ or a cycle of odd length. Hence there is no edge between $v$ and $w$.

\medskip

\noindent (2) Since there is no edge between $v$ and $w$, there are exactly three vertices in $C$ that are adjacent to $v$, and similarly for $w$. Furthermore, since $C$ is a $12$-cycle and there are no cycles of length less than $6$, the vertices in $C$ that are adjacent to $v$ must be exactly distance $4$ in $C$. Similarly, the vertices adjacent to $w$ are exactly distance $4$ in $C$. Without loss of generality, assume that $v$ is adjacent to vertices $1,5,$ and $9$ in $C$. Suppose, now, that $w$ is adjacent to vertices $3,7$, and $11$. Then there are three additional edges in $C_{14}$ that are not in $C$ and also not adjacent to $v$ or $w$. However, any such edge will be contained in a cycle of odd length. Hence, $w$ must either be adjacent to vertices $2,6$, and $10$ or $4,8$, and $12$, and the result follows. 
\end{proof}\bigskip

\begin{thm}\label{12transitive} The automorphism group of $C_{14}$ acts transitively on the set of $12$-cycles.    \end{thm}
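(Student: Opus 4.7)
The plan is to adapt the strategy of Theorem~\ref{14transitive}: given two $12$-cycles $C$ and $D$, I will normalize their vertex labelings so that the surrounding structure of $C_{14}$ looks identical in each case, then define the natural vertex bijection and check that it preserves edges.

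Write $C = x_1 x_2 \dots x_{12}$ and $D = y_1 y_2 \dots y_{12}$ with consecutive labelings, and let $v, w$ (respectively $v', w'$) be the two off-cycle vertices for $C$ (respectively $D$). By Lemma~\ref{12cyclepicture} the three neighbors of $v$ on $C$ lie at mutual cyclic distance exactly $4$, so by cyclically rotating the labeling I may assume they are $x_1, x_5, x_9$. Lemma~\ref{12cyclepicture}(2) then forces the neighbors of $w$ to be either $\{x_2, x_6, x_{10}\}$ or $\{x_4, x_8, x_{12}\}$. The reflection $x_i \mapsto x_{2-i}$ (indices mod $12$) of the labeling fixes $\{x_1, x_5, x_9\}$ setwise and interchanges the other two triads, so after possibly applying this reflection I may further assume $w$ is adjacent to $x_2, x_6, x_{10}$. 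Apply the identical normalization to $D$.

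Next I pin down the remaining edges of $C_{14}$. After counting the twelve cycle edges, the three edges at $v$, and the three edges at $w$, exactly six vertices still have degree $2$: namely $x_3, x_7, x_{11}$ and $x_4, x_8, x_{12}$. Since $C_{14}$ is $3$-regular and bipartite with odd- and even-indexed vertices in opposite parts, the three missing edges must form a perfect matching between these two triads. Discarding the cycle edges $x_3 x_4$, $x_7 x_8$, $x_{11} x_{12}$ from the candidate edges leaves exactly two such matchings: the \emph{short} matching $\{x_3 x_{12},\, x_4 x_7,\, x_8 x_{11}\}$ and the \emph{long} matching $\{x_3 x_8,\, x_7 x_{12},\, x_{11} x_4\}$. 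The short matching produces the forbidden $4$-cycle on $x_3, x_2, x_1, x_{12}$, so the remaining edges must form the long matching, and the identical conclusion for $D$ yields chord edges $y_3 y_8, y_7 y_{12}, y_{11} y_4$.

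Finally, the map $\alpha$ defined by $\alpha(x_i) = y_i$ for $i = 1, \dots, 12$, $\alpha(v) = v'$, and $\alpha(w) = w'$ is a vertex bijection sending all $21$ edges of $C_{14}$ (the twelve cycle edges, three at $v$, three at $w$, and three chord edges) to edges of $C_{14}$, hence an automorphism carrying $C$ to $D$. The main technical step is the matching-uniqueness claim: one must trust that the bipartite and degree constraints really reduce the options to just two candidate matchings and that a single $4$-cycle obstruction eliminates one of them.
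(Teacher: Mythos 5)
Your proof is correct and follows essentially the same route as the paper's: normalize the labelings of both $12$-cycles via Lemma~\ref{12cyclepicture} so that the off-cycle vertices attach at prescribed positions, deduce that the three remaining chords are forced, and then define the automorphism vertex-by-vertex. In fact your treatment of the forced chords (reducing to two candidate perfect matchings between the degree-two triads via bipartiteness and killing one with an explicit $4$-cycle) is more detailed than the paper's one-line assertion of the same fact.
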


\begin{proof}
Let $C$ and $D$ be $12$-cycles in $C_{14}$.  Label the vertices of $C$ and $D$ consecutively by $x_1, x_2, \ldots, x_{12}$ and $y_1, y_2, \ldots, y_{12}$ respectively.  Let $v$ and $w$ denote the two vertices in $C_{14}-C$. As in the proof Lemma~\ref{12cyclepicture} part (2) we can assume without loss of generality that $v$ is adjacent to $x_1$, $x_5$, and $x_9$ on $C$, and by Lemma~\ref{12cyclepicture} part (2) we can assume that $x_4$ is adjacent to $w$.  Then again, by the proof Lemma~\ref{12cyclepicture} part (2) we can assume that $w$ is adjacent to $x_4$, $x_8$, and $x_{12}$.  The remaining three edges of $C_{14}$ must be $\overline{x_2 x_7}$, $\overline{x_3  x_{10}}$, and $\overline{x_6 x_{11}}$ otherwise $C_{14}$ would contain a cycle of length less than $6$. 

  Let $s$ and $t$ denote the two vertices of $C_{14}-D$. By a similar argument we can assume that $s$ is adjacent to $y_1$, $y_5$, and $y_9$; that $t$ is adjacent to $y_4$, $y_8$, and $y_{12}$;  and that $\overline{y_2y_7}$, $\overline{y_3y_{10}}$, and $\overline{y_6y_{11}}$ are edges of $C_{14}$ not in $D$ and not adjacent to $s$ or $t$. Thus, we define an automorphism $\alpha$ of $C_{14}$ by $\alpha(x_i)=y_i$ for all $i$, and $\alpha(v)=s$, and $\alpha(w)=t$.  This automorphism $\alpha$ takes $C$ to $D$.  \end{proof}\bigskip

By Theorems~\ref{14transitive} and~\ref{12transitive}, we can assume that any $14$-cycle in $C_{14}$ looks like the outer circle in Figure~\ref{Heawood} and any $12$-cycle looks like the outer circle below in Figure~\ref{12cycle}. We now establish the distance between pairs of vertices in $C_{14}$.
 
  \begin{figure}[http]
\begin{center}
\includegraphics[width=.5\textwidth]{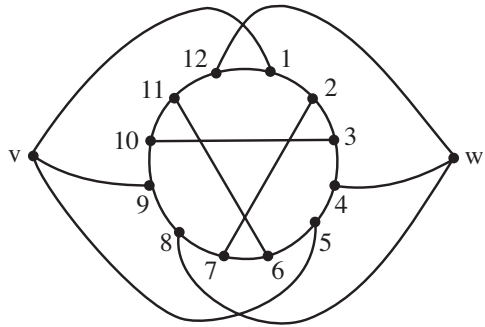}
\caption{There are two $12$-cycles in the complement of $v$ and $w$.}
\label{12cycle}
\end{center}
\end{figure}

\begin{lemma} \label{12cyclelemma} There are exactly two $12$-cycles in any subgraph of $C_{14}$ obtained by removing any distance $3$ pair of vertices together with their incident edges.
\end{lemma}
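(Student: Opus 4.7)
The plan is to prove this by an orbit-counting argument, leveraging the transitivity established in Theorem~\ref{12transitive}. First I would verify that the complementary pair $\{v,w\}$ of any 12-cycle $C$ in $C_{14}$ is at distance 3. From the labeling in the proof of Theorem~\ref{12transitive}, the path $v$-$x_1$-$x_{12}$-$w$ shows $d(v,w)\le 3$; Lemma~\ref{12cyclepicture}(1) rules out $d(v,w)=1$; and the disjoint neighbor sets $\{x_1,x_5,x_9\}$ and $\{x_4,x_8,x_{12}\}$ rule out $d(v,w)=2$, since $v$ and $w$ have all of their neighbors on $C$. So the complement map $C\mapsto V(C_{14})\setminus V(C)$ sends 12-cycles to distance-3 pairs.

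Next I would show that $\mathrm{Aut}(C_{14})=PGL_2(7)$ acts transitively on distance-3 pairs. The cleanest route is via the Fano plane: $C_{14}$ is the point-line incidence graph, and distance-3 pairs correspond exactly to non-incident point-line pairs, which form a single orbit under the group generated by collineations and duality. Since each vertex has $14-(1+3+6)=4$ vertices at distance $3$ (using girth $6$ to count distance-$2$ vertices), there are $14\cdot 4/2=28$ such pairs.

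Combining with Proposition~\ref{cycles} (which gives 56 twelve-cycles) and Theorem~\ref{12transitive} (the twelve-cycles form a single Aut-orbit), the complement map is Aut-equivariant with transitive actions on both its source and its target, so all fibers have equal size $56/28=2$. Hence for each distance-3 pair $\{v,w\}$, there are exactly two 12-cycles in $C_{14}-\{v,w\}$.

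The main obstacle is justifying the transitivity of Aut on distance-3 pairs. An alternative more self-contained in the style of this paper would be to directly show that $C_{14}-\{v,w\}$ has a canonical structure independent of the pair chosen---six degree-2 vertices (the neighbors of $v$ and $w$) and six degree-3 vertices, with the three edges between $N(v)$ and $N(w)$ forced by the girth and distance-3 hypothesis---and then enumerate Hamilton cycles by forcing the nine edges incident to the degree-2 vertices and analyzing the three remaining binary choices at degree-3 vertices, which partition into exactly two consistent Hamilton cycles: one using all three ``chord'' edges of Figure~\ref{12cycle} and one using none.
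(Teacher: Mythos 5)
Your orbit-counting strategy is genuinely different from the paper's proof, which reduces, via the dihedral symmetries of the standard drawing, to the four representative pairs $\{1,4\}$, $\{1,8\}$, $\{1,10\}$, $\{1,12\}$ and enumerates the Hamiltonian cycles of each of the four resulting $12$-vertex subgraphs by hand, finding two in each case. Your first step is sound: the complement of any $12$-cycle is a distance-$3$ pair, by the path $v\,x_1\,x_{12}\,w$, Lemma~\ref{12cyclepicture}(1), and the disjointness of the neighbor sets of $v$ and $w$. Your count of $28$ distance-$3$ pairs is also fine once you note that the diameter is $3$ (the paper's $D_1,D_2,D_3$ computation supplies this), and equivariance of the complement map together with transitivity on the target does force every fiber to have size $56/28=2$. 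What each approach buys: yours is shorter, avoids all case analysis, and explains the number $2$ conceptually; the paper's is self-contained and independent of Proposition~\ref{cycles} --- indeed, run in the other direction, the paper's lemma plus your surjectivity observation yields a hand proof that there are $28\cdot 2=56$ twelve-cycles, whereas your argument consumes that computer-verified count as an input.

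The one genuine gap is the step you yourself flag: transitivity of the automorphism group on distance-$3$ pairs. The fact is true (it is anti-flag transitivity for the Fano plane under $PGL_3(2)$ together with the point--line duality), but it is established nowhere in the paper and does not follow from anything you have proved; in particular it cannot be obtained from the rotations and reflections of Figure~\ref{Heawood}, since that dihedral group has order $14$ and splits the $28$ distance-$3$ pairs into four orbits of size $7$ --- exactly the four cases the paper checks. Without single-orbit transitivity on distance-$3$ pairs, equivariance only shows that the \emph{image} of the complement map is one orbit on which the fiber size is constant; a distance-$3$ pair outside the image could a priori contain no $12$-cycle, so the conclusion ``exactly two for \emph{any} distance-$3$ pair'' would not follow. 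You must therefore either import the Fano-plane fact with proof or citation, or fall back on your alternative sketch; the latter, once you justify that the edges between $N(v)$ and $N(w)$ form a perfect matching (each vertex of $N(v)$ is at distance $2$ from $w$, hence has a neighbor in $N(w)$, and has at most one such neighbor by girth), is essentially the paper's enumeration carried out once in a coordinate-free way.
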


\begin{proof}

Let $C$ denote the outer circle in Figure~\ref{Heawood}. First, we establish the distance between distinct pairs of vertices. By rotating and reflecting $C$, we can send vertex $1$ to any other vertex in the graph, so our conclusions about the possible distances of vertices from vertex $1$ apply to all vertices in the graph. Consider the sets of vertices $D_1=\{2,6,14\}, D_2=\{3, 5, 7, 9, 11, 13\}, D_3=\{4, 8,10,12\}$. Observe that for a vertex $v$ in $D_i$, the distance from $1$ to $v$ is $i$. Thus, the distance between every pair of distinct vertices is either $1$, $2$, or $3$.

By the above argument, every pair of vertices whose distance is $3$ is obtained from one of the pairs $\{1,4\}$, $\{1,8\}$, $\{1,10\}$, $\{1,12\}$ by a rotation of the graph.  Thus we only need to consider these four pairs of vertices. First we consider  $C_{14}-\{1,4\}$.  After removing the edges containing vertices $1$ and $4$, the vertices of valence two are $2$, $3$, $5$, $6$, $13$, and $14$.  Any $12$-cycle in $C_{14}-\{1,4\}$ must contain both edges incident to these six vertices. Thus any such $12$-cycle includes the edges $\overline{23}$, $\overline{2\medspace 11}$, $\overline{38}$, $\overline{56}$, $\overline{5\medspace 10}$, $\overline{67}$, $\overline{13\medspace 12}$, $\overline{13\medspace 14}$, and $\overline{14\medspace 9}$, as in Figure~\ref{12cycles}.  
 
 \begin{figure}[http]
\begin{center}
\includegraphics[width=.3\textwidth]{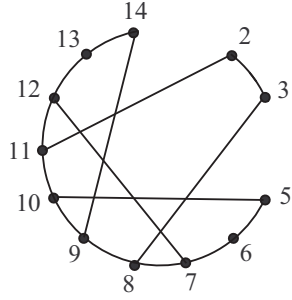}
\caption{The subgraph $C_{14}-\{1,4\}$.}
\label{12cycles}
\end{center}
\end{figure} 

If a $12$-cycle in $C_{14}-\{1,4\}$ contains the edge $\overline{7\medspace 12}$, then it must be $\{2,3,8,9,14,13,12,7,6,5,10,11\}$. If it does not contain the edge $\overline{7\medspace 12}$, then it must be $\{2,3,8,7,6,5,10,9,14,13,12,11\}$.  Thus there are precisely two $12$-cycles in  $C_{14}-\{1,4\}$.  

Similarly, we see that the $12$-cycles in $C_{14}-\{1,12\}$ are: $$\{14,9,8,7,6,5,10,11,2,3,4,13\} \ \textup{and}\ \{14,9,10,11,2,3,8,7,6,5,4,13\}.$$ The two $12$-cycles in $C_{14}-\{1,8\}$ are: $$\{2,3,4,5,6,7,12,13,14,9,10,11\} \ \textup{and}\ \{2,3,4,13,14,9,10,5,6,7,12,11\}.$$ And the $12$-cycles in $C_{14}-\{1,10\}$ are: $$\{2,3,4,5,6,7,8,9,14,13,12,11\} \ \textup{and} \ \{2,3,8,9,14,13,4,5,6,7,12,11\}.$$ 
\end{proof}

Next, we show that the automorphism group of $C_{14}$ act transitively on the set of disjoint pairs of $6$-cycles.  

\begin{thm}\label{6transitive} The automorphism group of $C_{14}$ acts transitively on the set of disjoint pairs of $6$-cycles.    \end{thm}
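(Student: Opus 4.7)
The plan is to show that any disjoint pair of $6$-cycles in $C_{14}$ is surrounded by the same rigid combinatorial configuration, and then define the required automorphism by matching labels. The starting observation I would prove is that if $C$ is any $6$-cycle of $C_{14}$ and $u \notin V(C)$, then $u$ has at most one neighbor on $C$: two neighbors of $u$ at cyclic distance $1$, $2$, or $3$ on $C$ would respectively close up into a $3$-, $4$-, or $5$-cycle in $C_{14}$, each contradicting either the girth $6$ condition or bipartiteness.

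Given disjoint $6$-cycles $C_1, C_2$ with complement vertices $v, w$, the above observation and the fact that $v$ has degree $3$ immediately force $vw$ to be an edge, since $v$ has at most one neighbor in each of $C_1, C_2$ accounting for at most two of its three edges. Hence each of $v, w$ has exactly one neighbor in $C_1$ and one in $C_2$. I would next argue that on each $C_i$ the neighbor $p$ of $v$ and the neighbor $q$ of $w$ lie at cyclic distance exactly $3$ (opposite on the $6$-cycle): the edge $vw$, together with $vp$, $wq$, and the length-$d$ path from $p$ to $q$ along $C_i$, closes into a cycle of length $d+3$, which forces $d=3$. Label $C_1$ cyclically as $a_1, \ldots, a_6$ with $a_1 \sim v$ and $a_4 \sim w$, and $C_2$ cyclically as $b_1, \ldots, b_6$ with $b_1 \sim v$ and $b_4 \sim w$. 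An edge count then shows the remaining four edges of $C_{14}$ form a matching between $\{a_2, a_3, a_5, a_6\}$ and $\{b_2, b_3, b_5, b_6\}$ respecting the bipartition.

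The main technical step, and where I expect the real work to lie, is showing that this bridge matching is essentially unique. A direct case analysis on the four bipartition-compatible matchings suffices: two of them pair the consecutive bridge vertices $a_2, a_3$ (or $a_5, a_6$) of $C_1$ with consecutive bridge vertices of $C_2$, creating a forbidden $4$-cycle; the remaining two are related by reversing the cyclic labeling of $C_2$. One may therefore assume, after such a choice of direction, that the bridge edges are exactly $\{a_2 b_3,\, a_3 b_6,\, a_5 b_2,\, a_6 b_5\}$. Once this rigid picture is established, for any second disjoint pair $\{D_1, D_2\}$ one selects analogous labels $a'_i, b'_i, v', w'$, and the bijection $a_i \mapsto a'_i$, $b_i \mapsto b'_i$, $v \mapsto v'$, $w \mapsto w'$ sends each edge of $C_{14}$ to an edge of $C_{14}$, hence defines an automorphism carrying $\{C_1, C_2\}$ to $\{D_1, D_2\}$.
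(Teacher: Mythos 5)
Your proposal is correct and follows essentially the same route as the paper's proof: force the edge $vw$ by a pigeonhole argument on the degree of $v$, place the attachments of $v$ and $w$ antipodally on each $6$-cycle, pin down the four bridge edges up to reversing the orientation of one cycle, and then send labels to labels. You make a few steps more explicit than the paper does (the at-most-one-neighbor-on-a-$6$-cycle observation and the enumeration of the four bipartition-compatible matchings), but the underlying argument is the same.
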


\begin{proof} 
 Let $A_1$ and $A_2$ be a pair of disjoint $6$-cycles in $C_{14}$, and let $v$ and $w$ be the two vertices in the complement of $A_1\cup A_2$. Suppose that there is no edge from $v$ to $w$.  Then without loss of generality, at least two of the vertices adjacent to $v$ are contained in $A_1$. However, this is impossible since there are no cycles of length less than six. Hence, there is an edge between $v$ and $w$.

Consecutively label the vertices of $A_1$ by $x_1,\ldots, x_6$ and similarly the vertices of $A_2$ by $y_1,\ldots, y_6$. We note that the other two edges that are incident to $v$ (resp. $w$) cannot both be incident to the same $6$-cycle because there are no cycles of length less than $6$. Without loss of generality, assume $v$ is adjacent to $x_1$ and $w$ is adjacent to $y_1$. It follows that the third edge incident to $v$ must be incident to vertex $y_4$, and similarly the third edge incident to $w$ must be incident to vertex $x_4$ as in Figure~\ref{6cycles4}.  We can observe that each of the remaining four edges in $C_{14}$ must have one vertex on $A_1$ and one vertex on $A_2$. 

\begin{figure}[http]
\begin{center}
\includegraphics[width=.55\textwidth]{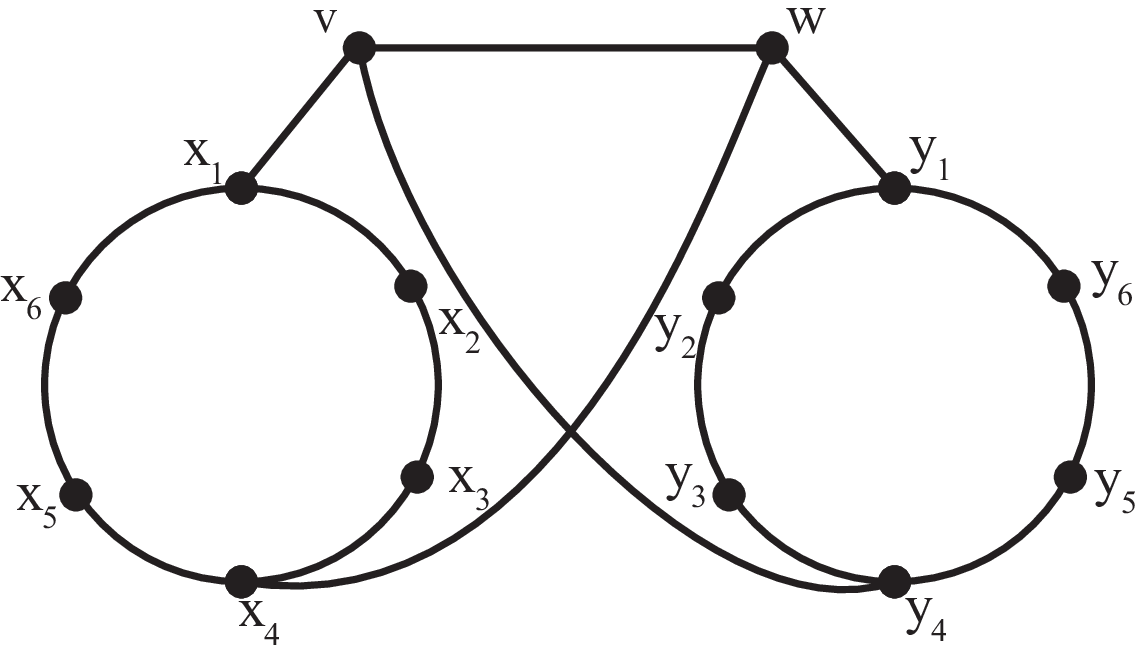}
\caption{}
\label{6cycles4}
\end{center}
\end{figure}

Now, consider vertex $x_2$. Since there can be no cycle of length less than $6$, $x_2$ must be adjacent to either $y_2$ or $y_6$.  Assume $x_2$ is adjacent to $y_2$. Again, since there is no cycle of length less than $6$, it follows that $x_3$ is adjacent to $y_5$, $x_5$ is adjacent to $y_3$, and $x_6$ is adjacent to $y_6$. Similarly, if $x_2$ is adjacent to $y_6$, then $x_3$ is adjacent to $y_3$, $x_5$ is adjacent to $y_5$, and $x_6$ is adjacent to $y_2$. Without loss of generality assume $x_2$ is adjacent to $y_2$.

Let $B=B_1\cup B_2$ be another pair of disjoint $6$-cycles in $C_{14}$.   Label the vertices of $B_1$ and $B_2$ by $r_1,\ldots, r_6$ and $z_1, \ldots, z_6$, respectively, and let $s$ and $t$ denote the two vertices in $C_{14}-B$.  By the same reasoning as above, there is an edge between the vertices $s$ and $t$. Furthermore, we can assume that $s$ is adjacent to $r_1$ and $z_4$, and $t$ is adjacent to $z_1$ and $r_4$.  Similarly, we can assume that $r_2$ is adjacent to $z_2$, $r_3$ is adjacent to $z_5$, $r_5$ is adjacent to $z_3$, and $r_6$ is adjacent to to $z_6$.  

 Thus, we define an automorphism $\alpha$ of $C_{14}$ by $\alpha(x_i)=y_i$ for all $i$, and $\alpha(v)=s$, and $\alpha(w)=t$.  This automorphism takes $A$ to $B$.  It follows that for both pairs of $6$-cycles $A$ and $B$ in $C_{14}$, the vertices and edges must be arranged as in Figure ~\ref{6cycles3}. 

\end{proof}

    \begin{figure}[http]
\begin{center}
\includegraphics[width=.55\textwidth]{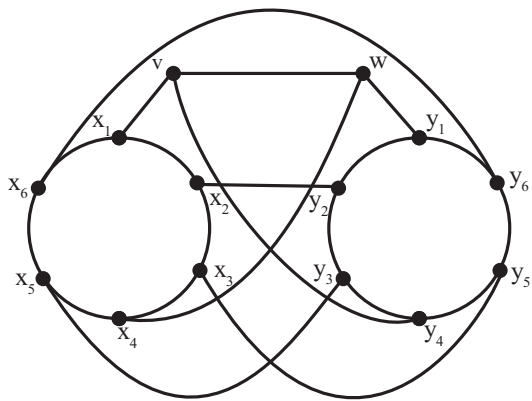}
\caption{Disjoint 6-cycles}
\label{6cycles3}
\end{center}
\end{figure}

\bibliography{HeawoodStructure}
\bibliographystyle{amsplain}

\end{document}